\DeclareMathOperator{\diag}{diag}
\newcommand\R{\mathbb R}
\theoremstyle{plain}
\newtheorem{thm}{Theorem}
\newtheorem{lem}[thm]{Lemma}
\newtheorem{Def}{Definition}
\theoremstyle{remark}
\journal{Applied Mathematics Letters}
\begin{document}

\begin{frontmatter}
\title{Observability for Initial Value Problems with Sparse Initial Data}
\author{Nicolae Tarfulea}
\ead{tarfulea@calumet.purdue.edu}
\ead[url]{http://ems.calumet.purdue.edu/tarfulea}
\address{Department of Mathematics, Purdue University Calumet, Hammond, IN 46323}
\begin{abstract}
In this work we introduce the concept of $s$-sparse observability for large systems of ordinary
differential equations. Let $\dot x=f(t,x)$ be such a system. At time $T>0$, suppose 
we make a set of observations $b=Ax(T)$ of the solution of the system with initial data $x(0)=x^0$,
where $A$ is a matrix satisfying the restricted isometry property. 
The aim of this paper is to give answers to the following questions: 
Given the observation $b$, is $x^0$ uniquely determined knowing that
$x^0$ is sufficiently sparse? Is there any way to reconstruct such a sparse initial data $x^0$?
\end{abstract}

\begin{keyword}observability \sep sparse initial data \sep restricted isometry property
\MSC[2010] 93B07 \sep 49J15 \sep 49K15

\end{keyword}

\end{frontmatter}

\section{Introduction and Results}
In recent years a number of papers on signal processing have developed a series of 
ideas and techniques on the reconstruction of a finite signal $x\in \R^m$ from many fewer observations 
than traditionally believed necessary. It is now common knowledge that it is possible to exactly recover 
$x$ knowing that it is sparse or nearly sparse in the sense that it has only a limited number of nonzero components.
A more formal definition of sparsity can be given through the $l_0$ norm $\| x\|_0:=\#\{ i:\, x_i\neq 0\}$, that is,
the cardinality of $x$'s support. If $\| x\|_0\leq s$, for $s$ a nonnegative integer, then we say that $x$ is $s$-sparse.

Since sparsity is a very often encountered feature in signal processing and many other 
mathematical models of real-life phenomena, estimation under 
sparsity assumption has been a topic of increasing interest in the last decades. At this point, the
work on this subject is so extended and growing so rapidly that it is extremely difficult to mention without
injustice its achievements and results. It is beyond the scope of this paper to review, even partially, the
contributions to this new and very dynamic area of research. For example, in the signal processing case, 
the interested reader can find valuable insight in the very informative survey by Bruckstein, Donoho and Elad \cite{BDE}.

This work addresses the recovery of the initial state of a high-dimensional dynamic variable from a restricted set
of measurements, knowing that the initial state is sparse. More precisely, let $x(\cdot)$ be the solution 
of the following initial-value problem
\begin{equation}\label{diffeq}
\dot x(t)=f(t,x(t)),\mbox{ for } t>0;\quad x(0)=x^0.
\end{equation}

Suppose that we can observe 
\begin{equation}
\label{observations}
b=Ax(T)
\end{equation}
at a certain time $T>0$, where the vector $b$ represents the observations, and 
$A$ is an $n\times m$ measurement matrix (dictionary).  As in signal processing case, 
the more interesting situation is when $n<<m$; one interprets $b$ as low-dimensional 
observations/measurements at time $T>0$ of the high-dimensional dynamic solution $x(\cdot)$. 
Here are two interesting questions that we address in this note: 

{\bf Question 1:} Given the observation $b$, is $x^0$ uniquely determined knowing that
$x^0$ is sufficiently sparse? 

{\bf Question 2:} Is there any way to reconstruct such a sparse initial data $x^0$?

Hereafter, $f:[0,T]\times\R^m\to\R^m$ is a Lipschitz function in the second variable, i.e.,
there is $L(\cdot)\in L^\infty (\mathbf R^m)$ such that $\| f(t,x)-f(t,y)\|\leq L(t)\| x-y\|$
in $[0,T]\times\R^m$. By $L$ we denote the $L^\infty$-norm of $L(\cdot)$ over the interval $[0,T]$.
For $x\in\mathbf R^m$, the $l^p$-norm ($p\geq 1$) of $x$ is defined as usually $\| x\|_p:=(\sum_{i=1}^m|x_i|^p)^{1/p}$.
In what follows, we also assume that the matrix $A$ satisfies the restricted isometry property. 
Let us recall the concept of restricted isometry constants (see \cite{CT}).
\begin{Def}
For each integer $s=1,2,\ldots$ define the isometry constant $\delta_s$ of a matrix $A$ as the smallest number such that
$(1-\delta_s)\| x\|_2^2\leq\| Ax\|_2^2\leq (1+\delta_s)\| x\|_2^2$ 
holds for all $x\in \mathbf R^m$ with $\| x\|_0\leq s$. 
\end{Def}
The following definition introduces a new concept, that is, the notion of $s$-sparse observability. 
\begin{Def}
The pair \eqref{diffeq}-\eqref{observations} is called $s$-sparse observable at time $T>0$ if the
knowledge of $b$ allows us to compute the $s$-sparse initial data vector $x^0$.
\end{Def}
In other words, \eqref{diffeq}-\eqref{observations} is $s$-sparse observable at time $T>0$ if for
any solutions $x_1(\cdot)$ and $x_2(\cdot)$ corresponding to the $s$-sparse intial data $x_1^0$ and $x_2^0$, 
respectively, with $Ax_1(T)=Ax_2(T)$, we have $x_1^0=x_2^0$.

Our first result gives a positive answer to Question 1. 
In fact it provides a sufficient condition for $s$-sparse observability. 
Roughly speaking, it says that $s$-sparse observability holds for sufficiently short periods of time.
\begin{thm}
\label{observable}
Let $T<\frac{1}{L}\ln{(1+\frac{\sqrt{1-\delta_{2s}}}{\| A\|})}$. Then 
\eqref{diffeq}-\eqref{observations} is $s$-sparse observable at time $T$. 
\end{thm}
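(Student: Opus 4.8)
The plan is to argue by contradiction using the definition of $s$-sparse observability together with a Gr\"onwall estimate. Suppose $x_1(\cdot)$ and $x_2(\cdot)$ solve \eqref{diffeq} with $s$-sparse initial data $x_1^0,x_2^0$ and satisfy $Ax_1(T)=Ax_2(T)$; set $w=x_1^0-x_2^0$ and $g(t)=\| x_1(t)-x_2(t)\|_2$. Since $w$ is the difference of two $s$-sparse vectors, its support has size at most $2s$, so $\| w\|_0\leq 2s$ and the restricted isometry property yields the lower bound $\| Aw\|_2\geq\sqrt{1-\delta_{2s}}\,\| w\|_2=\sqrt{1-\delta_{2s}}\,g(0)$.

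Next I would express $Aw$ through the dynamics. Integrating \eqref{diffeq} on $[0,T]$ and using $A(x_1(T)-x_2(T))=0$, one gets
\[
Aw=-A\int_0^T\bigl(f(t,x_1(t))-f(t,x_2(t))\bigr)\,dt,
\]
whence, by submultiplicativity of the operator norm and the Lipschitz hypothesis $\| f(t,x_1(t))-f(t,x_2(t))\|\leq L(t)g(t)\leq Lg(t)$,
\[
\| Aw\|_2\leq\| A\|\,L\int_0^T g(t)\,dt.
\]

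The middle step is a Gr\"onwall bound on $g$. From the integral form of the equation, $g(t)\leq g(0)+L\int_0^t g(s)\,ds$, so $g(t)\leq g(0)e^{Lt}$ and therefore $\int_0^T g(t)\,dt\leq g(0)(e^{LT}-1)/L$. Combining the three estimates gives
\[
\sqrt{1-\delta_{2s}}\,g(0)\leq\| A\|\,(e^{LT}-1)\,g(0).
\]
If $g(0)>0$ we may cancel it and rearrange to obtain $T\geq\frac{1}{L}\ln\bigl(1+\frac{\sqrt{1-\delta_{2s}}}{\| A\|}\bigr)$, contradicting the hypothesis on $T$. Hence $g(0)=\| w\|_2=0$, i.e. $x_1^0=x_2^0$, which is precisely $s$-sparse observability at time $T$.

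There is no serious obstacle here; the only points that require a little care are: (i) that the support of $w$ has size at most $2s$, so that $\delta_{2s}$ rather than $\delta_s$ is the relevant isometry constant; (ii) that implicitly $\delta_{2s}<1$, which is what makes the restricted isometry lower bound nontrivial and the logarithm in the statement well defined; and (iii) treating the degenerate case $g(0)=0$ separately before dividing. Pulling $L$ out of the integrals uses that $L$ is the $L^\infty$-bound of $L(\cdot)$ on $[0,T]$, as fixed in the hypotheses.
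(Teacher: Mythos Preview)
Your proof is correct and follows essentially the same route as the paper: contradiction via the $2s$-sparsity of the difference of initial data, the restricted isometry lower bound, and a Gr\"onwall estimate on $\|x_1(t)-x_2(t)\|_2$ to control $\|A(x_1^0-x_2^0)\|_2$ from above. The only cosmetic difference is that the paper keeps the pointwise Lipschitz factor $L(t)$ through the Gr\"onwall step (obtaining $M=e^{\int_0^T L(s)\,ds}-1$) before invoking $M\le e^{LT}-1$, whereas you replace $L(t)$ by its sup-norm $L$ at the outset; both lead to the same contradiction.
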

In the remainder of this section we indicate how the sparsest initial data $x^0$ can be found, or approximated. 
We consider the more applicable situation in which the measurements at time $T$ are corrupted with noise. That is,
\begin{equation}
\label{observations_corrupted}
b=Ax(T)+e,
\end{equation}
where $e$ is the noise term whose maximum magnitude is $\epsilon$ (i.e., $\| e\|_2\leq \epsilon$).

Suppose we seek the sparsest initial data $x^0$ that solves \eqref{diffeq} and \eqref{observations_corrupted}. 
In order to narrow down to one well-defined (sparse) solution, we 
consider the problem: $$\mbox{{\bf $(P_0)$}}\quad\mbox{ Find }\hat{x}:=\arg\min_{\| b-Ax(T)\|_2\leq\epsilon}\| x\|_0.$$
Here $x(t)$ is the solution of \eqref{diffeq} together with the initial condition $x(0)=x$. 
However, this is a very hard combinatorial-dynamic optimization problem and practically impossible to solve. 
We propose reconstructing $x^0$ by 
replacing the $l_0$ norm $\| \cdot\|_0$ with the weighted $l_1$ norm $\|\cdot\|_{1,w}$
($\| x\|_{1,w}:=\sum_{i=1}^mw_i|x_i|$, $w_i>0$, $i=1,2,...,m$), which
is, in a natural way, its best convex approximant. This strategy originates in the work of Santosa and Symes
\cite{SS} in the mid-eighties (see also \cite{DL,DS} for early results). In this context, we seek $x^0$ as the
solution to the dynamic optimization problem
$$\mbox{{\bf $(P_{1,w}^\epsilon)$}}\quad\mbox{ Find }x^*:=\arg\min_{\| b-Ax(T)\|_2\leq\epsilon}\| x\|_{1,w}.$$ 
Denote by $\tau$ the condition number of the matrix $W:=\diag\{w_i\}$, that is, 
$\tau:=\max_{1\leq i\leq m}\{w_i\}/\min_{1\leq i\leq m}\{w_i\}$, 
and by $x_s$ the vector having only the
$s$ largest entries of the vector $x$, the others being set to zero. 
The following result gives a positive answer to Question 2. In essence, it states that for sufficiently small times,
the accurate recovery of the sparse initial data $x^0$ can be done. Its proof is largely influenced 
by the methods and techniques used in \cite{C,CRT}. 
\begin{thm} 
\label{recover}
If $\delta_{2s}<(1+\tau\sqrt{2})^{-1}$ and 
$T<\frac{1}{L}\ln{\Big( 1+\frac{1-\delta_{2s}(1+\tau\sqrt{2})}{(1+\tau )\| A\|\sqrt{1+\delta_{2s}}}\Big )}$, 
then the solution $x^*$ to $(P_{1,w}^\epsilon)$ 
satisfies 
$\| x^*-x^0\|_2\leq C_0s^{-1/2}\| x^0-x_s^0\|_1+C_1\epsilon$, with $C_0$ and $C_1$ constants independent of $x^0$.
\end{thm}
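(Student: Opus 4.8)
The plan is to adapt the standard $\ell_1$-minimization analysis of \cite{C,CRT} to the present dynamic setting; the one genuinely new ingredient is a Gronwall-type estimate showing that, for short times $T$, observing $Ax(T)$ differs only slightly from observing $Ax^0$. Throughout, write $h:=x^*-x^0$, let $T_0$ be the set of indices of the $s$ largest-magnitude entries of $x^0$ (so $x^0-x_s^0=x^0_{T_0^c}$), split $T_0^c$ into consecutive blocks $T_1,T_2,\dots$ of size $s$ ordered by decreasing magnitude of $h$, set $T_{01}:=T_0\cup T_1$, and abbreviate $e_0:=\|x^0-x_s^0\|_1$.

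I would first record the cone condition. Since $x^0$ generates the solution $x(\cdot)$ with $\|b-Ax(T)\|_2=\|e\|_2\le\epsilon$, it is feasible for $(P_{1,w}^\epsilon)$, hence $\|x^*\|_{1,w}\le\|x^0\|_{1,w}$; splitting $\|x^0+h\|_{1,w}$ over $T_0$ and $T_0^c$ gives $\|h_{T_0^c}\|_{1,w}\le\|h_{T_0}\|_{1,w}+2\|x^0_{T_0^c}\|_{1,w}$, and bounding each weight by $\max_iw_i$ or $\min_iw_i$ where appropriate turns this into $\|h_{T_0^c}\|_1\le\tau\|h_{T_0}\|_1+2\tau e_0$. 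Combined with the shelling bound $\sum_{j\ge2}\|h_{T_j}\|_2\le s^{-1/2}\|h_{T_0^c}\|_1$ and $\|h_{T_0}\|_1\le\sqrt s\,\|h_{T_0}\|_2\le\sqrt s\,\|h_{T_{01}}\|_2$, this yields both $\sum_{j\ge2}\|h_{T_j}\|_2\le\tau\|h_{T_{01}}\|_2+2\tau s^{-1/2}e_0$ and $\|h\|_2\le(1+\tau)\|h_{T_{01}}\|_2+2\tau s^{-1/2}e_0$.

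Next comes the restricted isometry step. Expanding $\|Ah_{T_{01}}\|_2^2=\langle Ah_{T_{01}},Ah\rangle-\sum_{j\ge2}\langle Ah_{T_{01}},Ah_{T_j}\rangle$ and using the restricted isometry property together with the elementary inequality $|\langle Au,Av\rangle|\le\delta_{2s}\|u\|_2\|v\|_2$ for disjointly supported $s$-sparse $u,v$ (applied to $h_{T_0}$ and $h_{T_1}$ separately, which produces the factor $\sqrt2$), one gets in the usual way
$$(1-\delta_{2s})\|h_{T_{01}}\|_2\le\sqrt{1+\delta_{2s}}\,\|Ah\|_2+\sqrt2\,\delta_{2s}\sum_{j\ge2}\|h_{T_j}\|_2 .$$
Inserting the cone-condition estimate $\sum_{j\ge2}\|h_{T_j}\|_2\le\tau\|h_{T_{01}}\|_2+2\tau s^{-1/2}e_0$ and collecting terms, the hypothesis $\delta_{2s}<(1+\tau\sqrt2)^{-1}$ keeps $\rho:=1-\delta_{2s}(1+\tau\sqrt2)$ strictly positive, so
$$\|h_{T_{01}}\|_2\le\rho^{-1}\sqrt{1+\delta_{2s}}\,\|Ah\|_2+2\sqrt2\,\rho^{-1}\delta_{2s}\tau\,s^{-1/2}e_0 .$$

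The final and crucial step brings in the ODE. The solution $x(\cdot)$ from $x^0$ and the solution $x^*(\cdot)$ from $x^*$ are both feasible, so $\|A(x(T)-x^*(T))\|_2\le2\epsilon$; writing $x(T)-x^*(T)=h+\int_0^T(f(t,x(t))-f(t,x^*(t)))\,dt$ and using the Lipschitz hypothesis with Gronwall's inequality, which gives $\|x(t)-x^*(t)\|\le e^{Lt}\|h\|_2$ and hence bounds the integral term by $(e^{LT}-1)\|h\|_2$, I obtain
$$\|Ah\|_2\le2\epsilon+\|A\|\,(e^{LT}-1)\,\|h\|_2 .$$
Substituting $\|h\|_2\le(1+\tau)\|h_{T_{01}}\|_2+2\tau s^{-1/2}e_0$ and then this bound into the previous display, the coefficient of $\|h_{T_{01}}\|_2$ on the right-hand side becomes $\rho^{-1}\sqrt{1+\delta_{2s}}\,\|A\|\,(e^{LT}-1)\,(1+\tau)$, which is $<1$ precisely when $T<\frac1L\ln\!\Big(1+\frac{1-\delta_{2s}(1+\tau\sqrt2)}{(1+\tau)\|A\|\sqrt{1+\delta_{2s}}}\Big)$ --- exactly the stated threshold. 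Absorbing that term to the left then bounds $\|h_{T_{01}}\|_2$ by a constant times $\epsilon$ plus a constant times $s^{-1/2}e_0$, and feeding this back into $\|h\|_2\le(1+\tau)\|h_{T_{01}}\|_2+2\tau s^{-1/2}e_0$ gives the asserted estimate with $C_0,C_1$ depending only on $\delta_{2s},\tau,\|A\|,L,T$. I expect this last self-improving inequality to be the main obstacle: one must insert the flow-map perturbation in the right place (bounding $\|Ah\|_2$ rather than perturbing each inner product) and verify that the absorbed coefficient is genuinely below $1$, which is exactly what the logarithmic smallness of $T$ guarantees; the care needed in the weighted-to-unweighted passage of the cone-condition step is what routes the condition number $\tau$ into both hypotheses.
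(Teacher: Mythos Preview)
Your proof is correct and follows essentially the same Gronwall-plus-Cand\`es RIP argument as the paper: bound $\|Ah\|_2\le 2\epsilon+M\|A\|\,\|h\|_2$ via the flow map, run the standard cone-condition/shelling/RIP machinery, and close the resulting self-referential inequality using the smallness of $T$. The only notable variation is your choice of $T_0$ as the indices of the $s$ largest entries of $x^0$ (so that $\|x^0_{T_0^c}\|_{1,w}=\|x^0-x^0_s\|_{1,w}$ holds exactly in the cone step), whereas the paper takes $T_0$ to index the $s$ largest entries of $h$; you then absorb in $\|h_{T_{01}}\|_2$ while the paper absorbs in $\|h\|_2$, but both routes yield the identical time threshold.
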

\section{Proofs of Results}
\subsection{Proof of Theorem~\ref{observable}}
Suppose that \eqref{diffeq}-\eqref{observations} is not $s$-sparse observable at time $T$. That is, there
exist $s$-sparse vectors $x_1^0$ and $x_2^0$, with $x_1^0\neq x_2^0$, such that $Ax_1(T)=Ax_2(T)$, where
$x_1(t)$ and $x_2(t)$ are solutions to \eqref{diffeq} together with the initial data $x_1(0)=x_1^0$ and
$x_2(0)=x_2^0$, respectively. 
Since $x_1(t)=x_1^0+\int_0^tf(s,x_1(s))ds$ and $x_2(t)=x_2^0+\int_0^tf(s,x_2(s))ds$, it follows that 
$\| x_2(t)-x_1(t)\|_2 \leq \| x_2^0-x_1^0\|_2+\int_0^t\| f(s,x_2(s))-f(s,x_1(s))\|_2ds
\leq \|x_2^0-x_1^0 \|_2+\int_0^tL(s)\| x_2(s)-x_1(s)\|_2ds $. 
By Gronwall's inequality, one obtains 
$\| x_2(t)-x_1(t)\|_2 \leq \| x_2^0-x_1^0\|_2e^{\int_0^tL(s)ds},$ 
and so
\begin{align}
\nonumber
0 =\| Ax_2(T)-Ax_1(T)\|_2 &=\| A (x_2^0-x_1^0)+A\int_0^T[f(t,x_2(t))-f(t,x_1(t))]dt\|_2 \\ \nonumber
&\geq \| A(x_2^0-x_1^0)\|_2-\| A\| \int_0^TL(t)\| x_2(t)-x_1(t)\|_2dt \\ \nonumber
&\geq \| A(x_2^0-x_1^0)\|_2-\| A\| \int_0^TL(t)\| x_2^0-x_1^0\|_2e^{\int_0^tL(s)ds}dt\\ \nonumber
&\geq \| A(x_2^0-x_1^0)\|_2-M\| A\|\cdot\| x_2^0-x_1^0\|_2,
\end{align}
where $M:=e^{\int_0^TL(s)ds}-1$. Thus, $\| A(x_2^0-x_1^0)\|_2\leq M\| A\|\cdot\| x_2^0-x_1^0\|_2$.
Then, because $x_2^0-x_1^0$ is $2s$-sparse and from the restricted isometry property, we obtain that
$\sqrt{1-\delta_{2s}}\leq M\| A\|$,
which cannot hold for $T<\frac{1}{L}\ln{(1+\frac{\sqrt{1-\delta_{2s}}}{\| A\|})}$.

\subsection{Proof of Theorem~\ref{recover}}
The following Lemma is a simple application of the parallelogram identity and is due to Cand\'{e}s 
\cite[Lemma 2.1.]{C}. We include it here, together with its proof, for reader's convenience.
\begin{lem}
\label{candes}
Let $x$ and $x'$ be two vectors in $\mathbf R^m$. Suppose that $x$ and $x'$ are supported on disjoint subsets
and $\| x\|_0\leq s$ and $\| x'\|_0\leq s'$. Then $|\langle Ax,Ax'\rangle |\leq\delta_{s+s'}\| x\|_2\| x'\|_2.$
\end{lem}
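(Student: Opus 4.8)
The plan is to prove Lemma~\ref{candes} by reducing the inner product $\langle Ax, Ax'\rangle$ to a difference of squared norms via the parallelogram identity, and then applying the restricted isometry property to the vectors $x \pm x'$.

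First I would normalize. Since the inequality is homogeneous of degree one in each of $x$ and $x'$, it suffices to treat the case $\|x\|_2 = \|x'\|_2 = 1$; the general case follows by scaling (and the trivial case where one vector is zero is immediate). With this normalization the goal becomes $|\langle Ax, Ax'\rangle| \leq \delta_{s+s'}$.

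Next I would invoke the parallelogram identity in the form
$$\langle Ax, Ax'\rangle = \tfrac14\big(\|A(x+x')\|_2^2 - \|A(x-x')\|_2^2\big).$$
The crucial structural observation is that because $x$ and $x'$ have disjoint supports, both $x+x'$ and $x-x'$ are supported on the union of the two supports, hence are $(s+s')$-sparse, and moreover $\|x+x'\|_2^2 = \|x-x'\|_2^2 = \|x\|_2^2 + \|x'\|_2^2 = 2$. Applying the restricted isometry property with constant $\delta_{s+s'}$ to each term: $\|A(x+x')\|_2^2 \leq (1+\delta_{s+s'})\cdot 2$ and $\|A(x-x')\|_2^2 \geq (1-\delta_{s+s'})\cdot 2$. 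Substituting these bounds into the parallelogram identity yields
$$\langle Ax, Ax'\rangle \leq \tfrac14\big(2(1+\delta_{s+s'}) - 2(1-\delta_{s+s'})\big) = \delta_{s+s'}.$$
Repeating the argument with the roles of the two squared norms exchanged (equivalently, replacing $x'$ by $-x'$) gives the matching lower bound $\langle Ax, Ax'\rangle \geq -\delta_{s+s'}$, and together these give $|\langle Ax, Ax'\rangle| \leq \delta_{s+s'}$, which after undoing the normalization is exactly the claimed inequality.

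I do not anticipate a serious obstacle here: the only point requiring a moment's care is the disjoint-support hypothesis, which is precisely what guarantees that $x \pm x'$ are $(s+s')$-sparse (so that $\delta_{s+s'}$, rather than some larger constant, applies) and that the cross terms in $\|x \pm x'\|_2^2$ vanish. Everything else is the parallelogram identity plus two applications of the RIP bound.
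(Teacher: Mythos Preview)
Your proof is correct and follows essentially the same route as the paper's: normalize to unit vectors, apply the parallelogram identity, and use the RIP bounds on the $(s+s')$-sparse vectors $x\pm x'$ (whose squared norms equal $2$ thanks to the disjoint supports). The only cosmetic difference is that the paper handles the absolute value in one line via $|\langle Ay,Ay'\rangle|=\tfrac14\big|\,\|Ay+Ay'\|_2^2-\|Ay-Ay'\|_2^2\big|$, whereas you derive the two one-sided bounds separately.
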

\begin{proof}
Denote by $y=x/\| x\|_2$ and $y'=x'/\| x'\|_2$ the unit vectors in the $x$ and $x'$ directions, respectively. 
By the restricted isometry property, it is easy to see that 
$2(1-\delta_{s+s'})\leq \| Ay\pm Ay'\|_2^2\leq 2(1+\delta_{s+s'}).$ 
These inequalities, together with the parallelogram identity, give 
$|\langle Ay,Ay'\rangle |\ =\frac{1}{4}| \| Ay+ Ay'\|_2^2-\| Ay- Ay'\|_2^2|\leq \delta_{s+s'}$, 
and so $|\langle Ax,Ax'\rangle |\leq\delta_{s+s'}\| x\|_2\| x'\|_2$,  
which concludes the proof.
\end{proof}
Let $y(t)$ be the solution to the initial value problem $\dot y(t)=f(t,y)$ in $(0,T)$, $y(0)=x^*$, 
where $x^*$ is the solution to $(P_{1,w}^\epsilon)$. Then,
\begin{equation}
\label{ineq0}
\| Ay(T)-Ax(T)\|_2\leq \| b-Ay(T)\|_2+\| b-Ax(T)\|_2\leq 2\epsilon.
\end{equation}
Decompose $x^*$ as $x^*=x^0+h$. Then, as in \cite{C}, decompose $h$ into a sum of vectors $h_{T_0}$,
$h_{T_1}$, ..., each of $s$-sparsity. Here, $T_0$ corresponds to the locations of the first $s$ largest 
coefficients of $h$, $T_1$ to the locations of the next $s$ largest coefficients, and so on.
Since $x(t)=x^0+\int_0^tf(s,x(s))ds$ and $y(t)=x^*+\int_0^tf(s,y(s))ds$, it follows that 
$\| y(t)-x(t)\|_2 \leq \| h\|_2+\int_0^tL(s)\| y(s)-x(s)\|ds$. 
By Gronwall's inequality, we get
$\| y(t)-x(t)\|_2 \leq \| h\|_2e^{\int_0^tL(s)ds},\mbox{ for all }t\in [0,T].$ 
Then,
\begin{align}
\nonumber
\| Ay(T)-Ax(T)\|_2 &=\| Ah+A\int_0^T[f(t,y(t))-f(t,x(t))]dt\|_2 \\ \nonumber
&\geq \| Ah\|_2-\| A\| \int_0^TL(t)\| y(t)-x(t)\|_2dt \\ \nonumber
&\geq \| Ah\|_2-\| A\| \int_0^TL(t)\| h\|_2e^{\int_0^tL(s)ds}dt\\ \label{ineq5}
&\geq \| Ah\|_2-M\| A\|\| h\|_2,
\end{align}
where $M:=e^{\int_0^TL(s)ds}-1$. From \eqref{ineq0} and \eqref{ineq5}, we obtain
\begin{equation}
\label{iineq1}
\| Ah\|_2\leq 2\epsilon+M\| A\| \|h\|_2.
\end{equation}
Next, we estimate $\| h_{(T_0\cup T_1)^c}\|_2$, where $h_{(T_0\cup T_1)^c}:=h-h_{T_0}-h_{T_1}$. 
First off, observe that
$\| h_{T_j}\|_2\leq s^{1/2}\| h_{T_j}\|_{\infty}\leq s^{-1/2}\| h_{T_{j-1}}\|_1$, $j\geq 2$, 
and so
\begin{equation}
\label{ineq3} 
\sum_{j\geq 2}\| h_{T_j}\|_2\leq s^{-1/2}\sum_{j\geq 1}\| h_{T_j}\|_1=s^{-1/2}\| h_{T_0^c}\|_1,
\end{equation}
where $h_{T_0^c}:=h-h_{T_0}$. Thus,
\begin{equation}
\label{ineq1}
\| h_{(T_0\cup T_1)^c}\|_2=\Big\| \sum_{j\geq 2}h_{T_j}\Big\|_2\leq\sum_{j\geq 2}\| h_{T_j}\|_2
\leq s^{-1/2}\| h_{T_0^c}\|_1.
\end{equation}
Because $x^0$ is feasible, it satisfies $\| x^*\|_{1,w}\leq \| x^0\|_{1,w}$, which implies 
\begin{multline*}
\sum_{i=1}^mw_i|x_i^0|\geq \sum_{i=1}^mw_i|x_i^0+h_i|=\sum_{i\in T_0}w_i|x_i^0+h_i|+\sum_{i\in T_0^c}w_i|x_i^0+h_i|\\
\geq \sum_{i\in T_0}w_i|x_i^0|-\sum_{i\in T_0}w_i|h_i|-\sum_{i\in T_0^c}w_i|x_i^0|+\sum_{i\in T_0^c}w_i|h_i|,
\end{multline*}
and so 
$\| h_{T_0^c}\|_{1,w}\leq\| h_{T_0}\|_{1,w}+2\| x^0-x^0_s\|_{1,w}$. 
This last inequality induces 
$\min_{1\leq i\leq m}\{w_i\}\| h_{T_0^c}\|_1\leq\max_{1\leq i\leq m}\{w_i\}(\|h_{T_0}\|_1+2\| x^0-x^0_s\|_1)$, 
and so 
\begin{equation}
\label{ineq2}
\| h_{T_0^c}\|_1\leq\tau (\| h_{T_0}\|_1+2\| x^0-x^0_s\|_1).
\end{equation}
From \eqref{ineq1}, \eqref{ineq2}, and the Cauchy-Schwarz inequality, it follows that
\begin{equation}
\label{fivestars}
\| h_{(T_0\cup T_1)^c}\|_2 \leq\tau s^{-1/2}(\| h_{T_0}\|_1+2\| x^0-x^0_s\|_1\leq\tau (\| h_{T_0}\|_2+2e_0),
\end{equation}
with $e_0:=s^{-1/2}\| x^0-x^0_s\|_1$.
Now, let us estimate $\| h_{T_0\cup T_1}\|_2$. By the restricted isometry property, it follows that
\begin{equation}
\label{i1}
\frac{1}{\sqrt{1+\delta_{2s}}}\| Ah_{T_0\cup T_1}\|_2\leq\| h_{T_0\cup T_1}\|_2\leq \frac{1}{\sqrt{1-\delta_{2s}}}
\| Ah_{T_0\cup T_1}\|_2.
\end{equation}
Since $Ah_{T_0\cup T_1}=Ah-\sum_{j\geq 2}Ah_{T_j}$, we have that 
$\| Ah_{T_0\cup T_1}\|_2^2 =\langle Ah_{T_0\cup T_1}, Ah\rangle-\sum_{j\geq 2}\langle Ah_{T_0\cup T_1},Ah_{T_j}\rangle
\leq |\langle Ah_{T_0\cup T_1}, Ah\rangle |+\sum_{j\geq 2}|\langle Ah_{T_0\cup T_1},Ah_{T_j}\rangle|$. 
This and the restricted isometry property imply that  
\begin{align}
\nonumber
|\langle Ah_{T_0\cup T_1}, Ah\rangle | &\leq \| Ah_{T_0\cup T_1}\|_2\| Ah\|_2
\leq \| Ah_{T_0\cup T_1}\|_2 (2\epsilon +M\| A\| \| h\|_2)\quad \mbox{(by \eqref{iineq1})}\\ \label{i3}
&\leq \sqrt{1+\delta_{2s}}\| h_{T_0\cup T_1}\|_2(2\epsilon +M\| A\| \| h\|_2) \quad \mbox{(by \eqref{i1})}.
\end{align}
From Lemma~\ref{candes}, we have that 
$|\langle Ah_{T_i}, Ah_{T_j}\rangle |\leq \delta_{2s}\| h_{T_i}\|_2\| h_{T_j}\|_2$, for $i\neq j$, and so 
$|\langle Ah_{T_0\cup T_1}, Ah_{T_j}\rangle |\leq \delta_{2s}(\| h_{T_0}\|_2+\| h_{T_1}\|_2)\| h_{T_j}\|_2
\leq \sqrt{2} \delta_{2s}\| h_{T_0\cup T_1}\|_2\| h_{T_j}\|_2$. 
Therefore,
\begin{equation}
\label{iineq2}
\| Ah_{T_0\cup T_1}\|_2^2\leq\sqrt{1+\delta_{2s}}\| h_{T_0\cup T_1}\|_2(2\epsilon +M\| A\| \| h\|_2)+
\sqrt{2}\delta_{2s}\| h_{T_0\cup T_1}\|_2\sum_{j\geq 2}\| h_{T_j}\|_2.
\end{equation}
As in \cite{C}, let $\alpha =2\sqrt{1+\delta_{2s}}(1-\delta_{2s})^{-1}$ and 
$\rho=\sqrt{2}\delta_{2s}(1-\delta_{2s})^{-1}$. Then, 
\begin{align}
\nonumber
\| h_{T_0\cup T_1}\|_2 &\leq \alpha (\epsilon +\frac{1}{2}M\| A\|\| h\|_2)+\rho \sum_{j\geq 2}\| h_{T_j}\|_2
\quad \mbox{(by \eqref{i1} and \eqref{iineq2})}\\ \nonumber
&\leq \alpha (\epsilon +\frac{1}{2}M\| A\|\| h\|_2)+\rho s^{-1/2}\| h_{T_0^c}\|_1
\quad \mbox{(by \eqref{ineq3})}\\ \nonumber
&\leq \alpha (\epsilon +\frac{1}{2}M\| A\|\| h\|_2)+\rho s^{-1/2}\tau (\| h_{T_0}\|_1+2\| x^0-x^0_s\|_1)
\quad \mbox{(by \eqref{ineq2})}\\ \nonumber
&\leq \alpha (\epsilon +\frac{1}{2}M\| A\|\| h\|_2)+\rho \tau \| h_{T_0\cup T_1}\|_2+2\rho\tau e_0,
\end{align}
and so
\begin{equation}
\label{i6}
\| h_{T_0\cup T_1}\|_2 \leq (1-\rho\tau)^{-1}[\alpha (\epsilon +\frac{1}{2}M\| A\|\| h\|_2)+2\rho\tau e_0].
\end{equation}
(Observe that $1-\rho\tau>0$ since $\delta_{2s}<(1+\sqrt{2}\tau)^{-1}$.) 

Then, 
\begin{align*}
\| h\|_2 &\leq \| h_{T_0\cup T_1}\|_2+\| h_{(T_0\cup T_1)^c}\|_2
\leq (1+\tau)\| h_{T_0\cup T_1}\|_2+2\tau e_0 \quad \mbox{(by \eqref{fivestars})}\\
&\leq (1+\tau)(1-\rho\tau)^{-1}[\alpha (\epsilon +\frac{1}{2}M\| A\|\| h\|_2)+2\rho\tau e_0]+2\tau e_0, 
\quad \mbox{(by \eqref{i6})}
\end{align*}
and so $\| h\|_2 \leq C_0e_0+C_1\epsilon $, with $C_0:=2\tau (\rho +1)[1-\rho\tau-0.5\alpha (1+\tau)M\| A\|]^{-1}$
and $C_1:=\alpha (1+\tau)[1-\rho\tau-0.5\alpha (1+\tau)M\| A\|]^{-1}$. As a consequence of 
$T<\frac{1}{L}\ln{\Big( 1+\frac{1-\delta_{2s}(1+\tau\sqrt{2})}{(1+\tau )\| A\|\sqrt{1+\delta_{2s}}}\Big )}$, 
observe that both $C_0$ and $C_1$ are strictly positive.

\bigskip

{\bf Acknowledgments.} This work was supported by the 2009 Northwest Indiana Computational Grid (NWICG) Summer Program.


\begin{thebibliography}{BDE}

\bibitem{BDE} A.M. Bruckstein, D.L. Donoho, M. Elad, From Sparse Solutions of Systems of Equations to Sparse
Modeling of Signals and Images, SIAM REVIEW 51 (2009) 34-81.

\bibitem{C} E.J. Cand\`{e}s, The restricted isometry property and its implications for compressed sensing,
C. R. Acad. Sci. Paris, Ser. I 346 (2008) 589-592.

\bibitem{CRT} E. Cand\`{e}s, J. Romberg, T. Tao, Stable signal recovery from incomplete and inaccurate
measurements, Comm. Pure Appl. Math. 59 (2006) 1207-1223.

\bibitem{CT} E.J. Cand\`{e}s, T. Tao, Decoding by linear programming, IEEE Trans. Inform. Theory 51
(2005) 4203-4215.

\bibitem{DL} D.L. Donoho, B.F. Logan, Signal recovery and the large sieve,  SIAM J. Appl.
Math. 52 (1992) 577-591.

\bibitem{DS} D.L. Donoho, P.B. Starck, Uncertainty principles and signal recovery, SIAM J. Appl.
Math. 49 (1989) 906-931.

\bibitem{SS} F. Santosa, W.W. Symes, Linear inversion of band-limited reflection seismograms,
SIAM J. Sci. Statist. Comput. 7 (1986) 1307-1330.
\end{thebibliography}
\end{document}